\documentclass[11pt]{article}

\usepackage[margin=1in]{geometry}
\usepackage[T1]{fontenc}
\usepackage[utf8]{inputenc}
\usepackage{lmodern}
\usepackage{microtype}
\usepackage{amsmath,amssymb,amsfonts,amsthm,mathtools}
\usepackage{bm}
\usepackage{enumitem}
\usepackage{graphicx}
\usepackage{subcaption}
\usepackage{xcolor}
\usepackage{hyperref}
\usepackage[nameinlink,capitalize,noabbrev]{cleveref}

\hypersetup{
  colorlinks=true,
  linkcolor=blue!60!black,
  citecolor=blue!60!black,
  urlcolor=blue!60!black,
  pdftitle={Title of the Article},
  pdfauthor={Sachin Bhalekar}
}

\theoremstyle{plain}
\newtheorem{theorem}{Theorem}[section]

\theoremstyle{definition}

\numberwithin{equation}{section}

\title{Stability Analysis of Pantograph Delay Differential Equations}
\author{
  Sachin Bhalekar\\
  School of Mathematics and Statistics, University of Hyderabad, Hyderabad, India\\
  \href{mailto:sachinbhalekar@uohyd.ac.in}{sachinbhalekar@uohyd.ac.in}
}
\date{}

\begin{document}

\maketitle
\begin{abstract}
This article investigates the stability of pantograph delay differential equations, in which the delayed argument is proportional to the present time. We derive analytic criteria that partition the parameter plane into unstable, asymptotically stable, and delay-dependent stability regions. The theoretical results are supported by numerical simulations that illustrate the sharpness of the stability boundaries. We also formulate a proportional-delay analogue of the Mackey--Glass chaotic delay differential equation and examine the resulting dynamical behaviour.
\end{abstract}

\noindent\textbf{Keywords.} Proportional delay; Stability analysis; Chaos.

\medskip

\noindent\textbf{2020 Mathematics Subject Classification.} Primary 34K20; Secondary 34K06, 37C75, 37D45.

\section{Introduction}
Pantograph differential equations arise naturally in engineering and applied science, especially in problems where the delay is proportional to the present time. In this article, we consider the scalar proportional-delay equation
\begin{equation}
\dot{x}(t)=ax(t)+bx(qt), \quad 0<q<1, \label{peq}
\end{equation}
where the term $x(qt)=x(t-(1-q)t)$ represents a time-dependent delay. As shown in \cite{BhalekarPatade2017}, equation~(\ref{peq}) admits the power-series representation
\begin{equation}
  x(t)=x(0)\sum_{n=0}^{\infty}\frac{t^n}{n!}\prod_{j=0}^{n-1}\left(a+bq^j\right). \label{ssol}
\end{equation}
In particular, when $a>0$ and $b>0$, the corresponding solution satisfies suitable exponential bounds.

As noted in the monograph of Bellen and Zennaro \cite{BellenZennaro2003}, pantograph-type equations occur in a surprisingly wide range of pure and applied contexts. Besides their original engineering motivation, such equations and closely related dilation functional equations appear in number theory \cite{Mahler1940}, stochastic models and probability-inspired problems \cite{Ferguson1972,ZhangLi2016}, nonlinear dynamics, quantum mechanics, and electrodynamics \cite{ZhangLi2016,DehghanShakeri2008}. They also arise in biological cell-growth models \cite{HallWake1989} and astrophysical modelling \cite{Ambartsumyan1944}, illustrating that proportional delay is not merely a technical variation of constant delay but a structure shared by several independent modelling problems.

The name ``pantograph equation'' is closely connected with the modelling of current collection systems in electric locomotives \cite{OckendonTayler1971}. A rigorous early analysis of the scalar functional differential equation (\ref{peq}) was given by Kato and McLeod \cite{KatoMcLeod1971}, who established foundational existence, uniqueness, and asymptotic properties. Subsequent work extended the theory to more general proportional-delay and pantograph-type equations, including generalized pantograph functional differential equations \cite{Iserles1993}. Numerical aspects have also received substantial attention because proportional delay prevents the direct use of many standard methods for ordinary and constant-delay differential equations; important contributions include studies of evolutionary delay equations \cite{BakerPaulWille1995}, pantograph integro-differential equations \cite{IserlesLiu1994}, and general numerical frameworks for delay differential equations \cite{BellenZennaro2003}. 

Equations of the form (\ref{peq}) are difficult to analyse by classical techniques. Unlike constant-delay differential equations, they do not lead to a simple characteristic equation, and the Laplace transform is not directly applicable for stability analysis. Standard numerical methods for ordinary differential equations, such as predictor-corrector and Runge-Kutta schemes, also require suitable modifications to handle the proportional delay. Although (\ref{peq}) admits an exact series solution, this solution is generally not expressible in terms of familiar special functions. Consequently, stability analysis based only on the solution series has significant limitations. These difficulties motivate the present study of the stability behaviour of (\ref{peq}).
The paper is organized as follows. Section \ref{stab} provides sufficient conditions for instability and stability by using the series solution, Lyapunov--Krasovskii theory, and Lyapunov--Chetaev theory. Numerical observations are also presented in this section. The $q$-analogue of the Mackey--Glass equation is studied in Section \ref{qmgs}, where chaos and bifurcations are investigated. The conclusions are summarized in Section \ref{concl}. The details of the numerical method are given in the appendix. The datasets related to this work are available in the GitHub repository \cite{BhalekarPantographGitHub}.

\section{Stability analysis}\label{stab}
\subsection{Challenges in applying classical theory}
For constant-delay equations, stability is often studied by substituting an exponential trial solution and deriving a characteristic quasi-polynomial. In the proportional-delay equation (\ref{peq}), however, the substitution $x(t)=e^{\lambda t}$ gives
\[
\lambda e^{\lambda t}=a e^{\lambda t}+b e^{\lambda qt}.
\]
The two exponential terms have different time scalings, so the factor $e^{\lambda t}$ cannot be eliminated to obtain an equation depending only on $\lambda$. More generally, the explicit dependence on both $t$ and $qt$ prevents a direct reduction to a characteristic equation. This is the main obstruction to applying the standard spectral stability theory used for ordinary or constant-delay differential equations.

\subsection{Some sufficient conditions for instability}
We first use the series representation (\ref{ssol}) to obtain elementary sufficient conditions for instability. Since equation (\ref{peq}) is linear, it is enough to exhibit one nonzero initial value that produces an unbounded solution; for convenience, we take $x(0)=1$.
\begin{theorem}\label{thm1}
If $a>0$ and $b>0$ then the system (\ref{peq}) is unstable for all $q\in (0, 1)$.
\end{theorem}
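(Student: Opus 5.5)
We argue directly from the series representation (\ref{ssol}) with $x(0)=1$. Since equation (\ref{peq}) is linear, it suffices to show that this particular solution is unbounded on $[0,\infty)$. This is incompatible with stability: by linearity, the solution with initial value $\varepsilon$ is $\varepsilon$ times this one, so it leaves every neighbourhood of the origin no matter how small $\varepsilon>0$ is.

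The key observation is that when $a>0$ and $b>0$, every factor $a+bq^j$ with $0<q<1$ satisfies $a+bq^j>a>0$. Hence every coefficient in (\ref{ssol}) is positive, and
\[
\prod_{j=0}^{n-1}\left(a+bq^j\right)\ \ge\ a^n \quad\text{for all } n\ge 0,
\]
with strict inequality for $n\ge1$. For $t\ge 0$ all terms of the series are nonnegative, so we may compare termwise:
\[
x(t)=\sum_{n=0}^{\infty}\frac{t^n}{n!}\prod_{j=0}^{n-1}\left(a+bq^j\right)\ \ge\ \sum_{n=0}^{\infty}\frac{(at)^n}{n!}=e^{at}.
\]
Since $a>0$, we get $x(t)\to\infty$ as $t\to\infty$. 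The zero solution is therefore unstable for every $q\in(0,1)$.

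The only point needing care is the legitimacy of working with the series. Its convergence for all $t$ follows from the ratio test: the ratio of consecutive coefficients is $(a+bq^n)/(n+1)\to 0$. We take the representation (\ref{ssol}) as given from \cite{BhalekarPatade2017}. Termwise comparison then requires only nonnegativity of the terms for $t\ge0$, so there is no real obstacle. If one prefers to avoid the series, an alternative route is a differential inequality. Positivity of $x$ on $[0,\infty)$ can be shown by a first-crossing argument: at a first zero $t_0$ we would need $\dot x(t_0)\le 0$, yet $\dot x(t_0)=b\,x(qt_0)>0$. Given positivity, $\dot x\ge ax$, and Gr\"onwall's inequality gives $x(t)\ge e^{at}$.
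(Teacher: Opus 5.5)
Your proof is correct and is essentially the paper's argument: the bound $a+bq^j>a>0$ gives $\prod_{j=0}^{n-1}(a+bq^j)\ge a^n$, and termwise comparison in (\ref{ssol}) with $x(0)=1$ yields $x(t)\ge e^{at}\to\infty$. Your extra remarks are valid refinements the paper does not spell out. These are the ratio-test convergence of the series, the linear-scaling justification of instability, and the series-free first-crossing plus Gr\"onwall argument; your non-strict $\ge$ is also slightly more precise than the paper's strict inequality, which fails at $t=0$.
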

\textbf{Proof}: We have
\begin{equation}
0<a<a+b q^j, \quad j=0, 1, 2, \cdots.
\end{equation}
It follows that
\begin{equation}
0<a^n<\prod_{j=0}^{n-1}\left(a+bq^j\right).
\end{equation}
Consequently, by (\ref{ssol}),
\begin{equation}
e^{at}<x(t).
\end{equation}
Since $a>0$, the lower bound $e^{at}$ grows without bound as $t\to\infty$. Hence the solution of (\ref{peq}) with $x(0)=1$ is unbounded, and the zero solution is unstable.
\qed

\begin{theorem}\label{thm2}
If $a>-b>0$ then the system (\ref{peq}) is unstable for all $q\in (0, 1)$.
\end{theorem}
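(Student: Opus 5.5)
The condition $a>-b>0$ means $b<0$ and $a+b>0$. Since $0<q^j\le 1$ and $b<0$, every factor in the series (\ref{ssol}) satisfies
\[
a+bq^j\ \ge\ a+b>0,\qquad j=0,1,2,\dots .
\]
The plan is to mimic the proof of Theorem \ref{thm1}, replacing the lower bound $a$ on each factor by $a+b$. With $x(0)=1$, every coefficient in (\ref{ssol}) is positive and
\[
\prod_{j=0}^{n-1}\left(a+bq^j\right)\ \ge\ (a+b)^n>0 .
\]
Summing termwise for $t\ge 0$, which is legitimate because the series converges absolutely for all $t$ (the factors are bounded by $a$ in absolute value, so the terms are dominated by $a^nt^n/n!$), gives
\[
x(t)\ \ge\ \sum_{n=0}^{\infty}\frac{(a+b)^n t^n}{n!}=e^{(a+b)t}.
\]

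Since $a+b>0$, the right-hand side tends to infinity as $t\to\infty$. Hence the solution with $x(0)=1$ is unbounded. By linearity, $\varepsilon x(t)$ is the solution with initial value $\varepsilon$, and it is unbounded for every $\varepsilon>0$, so the zero solution is unstable.

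The only point needing care is the sign of the factors. Because $b<0$, the bound must come from $q^j\le1$ rather than $q^j\ge 0$, and we need $a+b>0$ to ensure the product is positive and grows like $(a+b)^n$. Both follow directly from the hypothesis, so I expect no real obstacle. Strict inequality for $j\ge1$ also gives $x(t)>e^{(a+b)t}$ for $t>0$, but this is not needed.
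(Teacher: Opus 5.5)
Your proof is correct and follows the paper's proof: bound each factor $a+bq^j$ below by $a+b>0$, compare the series termwise with $e^{(a+b)t}$, and conclude that the solution with $x(0)=1$ is unbounded. Your version is slightly more careful than the paper's. You use $\ge$ where the paper's strict inequality $b<bq^j$ fails at $j=0$, and you justify both the termwise summation and the step from one unbounded solution to instability via linearity.
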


\textbf{Proof}: We have $b<0$ and $0<q<1$.

 Hence, $b<bq^j$ for $j=0, 1, 2, \cdots.$

 It follows that $0<a+b<a+bq^j$ for $j=0, 1, 2, \cdots.$

 Thus, $0<(a+b)^n<\prod_{j=0}^{n-1}\left(a+bq^j\right).$

 Hence, $e^{(a+b)t}<x(t)$, where $a+b>0$. Thus the solution with $x(0)=1$ is unbounded, which proves instability under the assumption $a>-b>0$.
 \qed 

\subsection{Lyapunov--Krasovskii theory for the stability}
In this section, we use Lyapunov--Krasovskii theory \cite{krasovskii1963stability} to provide a sufficient condition for stability of the system (\ref{peq}).

\begin{theorem}\label{LyTh}
The system (\ref{peq}) is asymptotically stable if $a<0, a<b<-a$ and $\frac{b^2}{a^2}<q<1$.
\end{theorem}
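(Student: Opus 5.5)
The plan is to build a Lyapunov--Krasovskii functional adapted to the proportional delay. The key observation is that the change of variable $s=q\theta$ turns an integral over $[qt,t]$ into one weighted by a factor $1/q$. I would take
\[
V(t)=x(t)^2+\gamma\int_{qt}^{t}x(s)^2\,ds ,\qquad \gamma>0,
\]
with $\gamma$ to be chosen later. Along solutions of (\ref{peq}) we get
\[
\dot V(t)=2ax(t)^2+2bx(t)x(qt)+\gamma x(t)^2-\gamma q\,x(qt)^2 .
\]
The factor $q$ in the last term comes from $\frac{d}{dt}(qt)=q$, and it is exactly where $q$ enters the final condition. This is a quadratic form in $(x(t),x(qt))$ with matrix
\[
M=\begin{pmatrix} 2a+\gamma & b\\ b & -\gamma q\end{pmatrix}.
\]

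The next step is to make $M$ negative definite. That requires $2a+\gamma<0$ and $\det M=-\gamma q(2a+\gamma)-b^2>0$, that is,
\[
\gamma q(-2a-\gamma)>b^2 .
\]
The function $\gamma\mapsto\gamma(-2a-\gamma)$ on $(0,-2a)$ attains its maximum $a^2$ at $\gamma=-a$. I would therefore choose $\gamma=-a>0$, which is allowed because $a<0$. Then $2a+\gamma=a<0$, and the determinant condition becomes $qa^2>b^2$, which is precisely the hypothesis $b^2/a^2<q$. The hypothesis $a<b<-a$, i.e. $|b|<|a|$, is what makes the interval $(b^2/a^2,1)$ nonempty. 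So the hypotheses of Theorem~\ref{LyTh} are exactly the negative-definiteness conditions for the optimal $\gamma$. With this choice there is $c>0$ with
\[
\dot V\le -c\bigl(x(t)^2+x(qt)^2\bigr)\le -c\,x(t)^2 .
\]

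From this I would conclude in two steps. First, since $V\ge x(t)^2$ and $V$ is nonincreasing, we get $x(t)^2\le V(0)=x(0)^2$, because the integral term vanishes at $t=0$. This gives Lyapunov stability, and for this problem the initial data is just $x(0)$. Second, integrating gives $c\int_0^\infty x^2\,dt\le V(0)<\infty$. Also $\dot x=ax(t)+bx(qt)$ is bounded because $x$ is bounded, so $x^2$ is uniformly continuous. Barbalat's lemma then gives $x(t)\to0$, which is asymptotic stability.

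I expect the main obstacle to be this last step rather than the algebra. Standard Krasovskii theorems are stated for bounded constant delays, whereas here the delay $(1-q)t$ is unbounded. So rather than citing \cite{krasovskii1963stability} verbatim, I would argue directly as above, via monotonicity of $V$ and Barbalat's lemma. Beyond that, the only care needed is the chain rule for the variable lower limit $qt$, which produces the factor $q$.
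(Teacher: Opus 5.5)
Your proposal is correct and follows the paper's proof essentially step for step: the same functional $x(t)^2+\mu\int_{qt}^{t}x(s)^2\,ds$, the same matrix $M$, and the same optimal choice $\mu=\gamma=-a$, which turns $\det M>0$ into $b^2<a^2q$. The only difference is the final step: the paper simply appeals to Krasovskii theory, whereas you conclude directly from $x(t)^2\le V(t)\le x(0)^2$, $\int_0^\infty x^2\,dt<\infty$ and Barbalat's lemma, which is the more careful route since the delay $(1-q)t$ is unbounded.
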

\textbf{Proof}: A Lyapunov--Krasovskii functional can be constructed  as follows:
\[
V(x(t))=x(t)^2+\mu\int_{qt}^{t} x(s)^2\,ds,
\]
where \(\mu>0\) is a constant to be determined.

Using Leibniz's rule,
\[
\frac{d}{dt}\int_{qt}^{t}x(s)^2\,ds
=
x(t)^2-q\,x(qt)^2.
\]

Along the solution trajectories,
\[
\dot V(x(t))
=
2x(t)\dot x(t)
+
\mu\bigl(x(t)^2-q\,x(qt)^2\bigr).
\]

Using (\ref{peq}),
\[
\dot V(x(t))
=
(2a+\mu)x(t)^2
+
2b\,x(t)x(qt)
-
\mu q\,x(qt)^2.
\]

We represent this in a quadratic form by defining
\[
z(t)=
\begin{bmatrix}
x(t)\\[2mm]
x(qt)
\end{bmatrix}.
\]
Then
\[
\dot V(z(t))=z(t)^T M z(t),
\]
where
\[
M=
\begin{bmatrix}
2a+\mu & b\\
b & -\mu q
\end{bmatrix}.
\]

If \(M\) is negative definite, then \(\dot V(z(t))<0\) for all nonzero \(z(t)\), implying asymptotic stability of the trivial solution of (\ref{peq}).

The symmetric matrix $M$ is negative definite if there exists \(\mu>0\) satisfying \(2a+\mu<0\), and \(\det(M)>0\).

That is,
\[
 a<-\mu/2<0\qquad  \text{and} \qquad (2a+\mu)(-\mu q)>b^2 
\]

Then \(V\) is a strict Lyapunov functional and the zero solution of (\ref{peq}) is asymptotically stable.

The quadratic function \(-\mu q(2a+\mu)\) attains its maximum at \(\mu=-a\), giving the sufficient condition
\[
a<0,
\qquad
b^2<a^2 q.
\]
Equivalently, 
$a<0, \qquad a<b<-a \qquad \text{and} \qquad \frac{b^2}{a^2}<q<1$ gives asymptotic stability of the system (\ref{peq}).
\qed 

\textbf{Note}: This result is a sufficient but not a necessary condition for stability. In fact, the numerical computations show that the system is asymptotically stable for all $0<q<1$ in the region $a<0, \quad a<b<-a$. We provide more details in Section \ref{conjec}. 

\subsection{Lyapunov--Chetaev theory for the instability}
In this section, we use Lyapunov--Chetaev theory (see \cite{Chetaev1961} and Section 9.8 in \cite{michel2008stability}) to provide a sufficient condition for instability of the system (\ref{peq}).

\begin{theorem}\label{CheTh}
The system (\ref{peq}) is unstable for all $q\in(0,1)$ if $a+b>0$.
\end{theorem}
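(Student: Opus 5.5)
By linearity, it suffices to exhibit one solution that is unbounded. I take $x(0)=1$ and split the region $a+b>0$ into cases. If $a>0$ and $b>0$, the claim is \cref{thm1}. If $b<0$, then $a>-b>0$ and the claim is \cref{thm2}. If $b=0$, then $a>0$ and $x(t)=e^{at}$. So the only new case is $a\le 0<b$ with $a+b>0$. There the products $\prod_{j}(a+bq^j)$ in (\ref{ssol}) eventually change sign, so the series argument breaks down and I switch to a comparison argument.

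\emph{Step 1 (positivity).} For $b>0$ and $x(0)=1$, I show $x(t)>0$ for all $t\ge0$. Suppose $t_0>0$ is the first zero. Then $\dot x(t_0)=b\,x(qt_0)>0$, since $qt_0<t_0$. This contradicts $x$ decreasing to $0$ at $t_0$.

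\emph{Step 2 (a polynomial subsolution).} Motivated by the Kato--McLeod asymptotics, I look for power laws. For $y(t)=t^{\kappa}$ one has
\[
a\,y(t)+b\,y(qt)=(a+bq^{\kappa})\,t^{\kappa}.
\]
Define $\kappa_0$ by $q^{\kappa_0}=-a/b\in[0,1)$. Then $\kappa_0>0$, and $\kappa_0=\infty$ when $a=0$. Fix any $\kappa\in(0,\kappa_0)$. Then $c:=a+bq^{\kappa}>0$, and $\dot y(t)=\kappa t^{\kappa-1}<c\,t^{\kappa}$ for all $t>T:=\kappa/c$. Hence $y$ is a strict subsolution on $(T,\infty)$:
\[
\dot y<a y+b\,y(qt).
\]

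\emph{Step 3 (comparison).} By Step 1, $m:=\min_{[qT,T]}x>0$. Choose $\varepsilon>0$ with $\varepsilon y\le m/2$ on $[qT,T]$, and set $w=x-\varepsilon y$. Then $w>0$ on $[qT,T]$ and $\dot w>a w+b\,w(qt)$ for $t>T$. Suppose $t_1>T$ is the first point where $w(t_1)=0$. Then $w(qt_1)\ge0$, because $qt_1<t_1$ and $w>0$ on $[qT,t_1)$. It follows that $\dot w(t_1)>b\,w(qt_1)\ge0$, contradicting $w$ decreasing to $0$ at $t_1$. Therefore $x(t)\ge\varepsilon t^{\kappa}\to\infty$, and the zero solution is unstable.

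The main obstacle is Step 2 together with the comparison in Step 3. The natural Chetaev candidate $V=x^2-\mu\int_{qt}^t x^2\,ds$ does not obviously work, because $-\mu\int_{qt}^t x^2\,ds$ can grow and the sign of $\dot V$ is hard to control. The positivity-plus-comparison route sidesteps this, but it relies on $b>0$, which makes the equation quasi-monotone. If a genuine Lyapunov--Chetaev presentation is preferred, I would try $V=\ln x(t)$ on the positive cone. From Steps 1--3 one gets $\dot V\ge a+b\,x(qt)/x(t)$, and the subsolution bound is then what is needed to keep this ratio controlled.
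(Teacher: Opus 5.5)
Your proof is correct, and it takes a genuinely different route from the paper. The paper uses the Chetaev function $V=x^2$, notes that $\dot V=2(a+b)x_0^2>0$ at $t=0$, and concludes instability from the fact that $V$ then increases on some interval $(0,\epsilon)$. This is short and covers every case with $a+b>0$ at once. As written, however, it is local in time. The derivative $\dot V=2ax(t)^2+2bx(t)x(qt)$ involves the delayed value, so its sign at $t=0$ does not propagate. An initial increase of $|x|$ therefore does not show that solutions leave a fixed neighbourhood of $0$. For this linear equation, leaving a fixed neighbourhood is equivalent to the solution with $x(0)=1$ being unbounded, and that is exactly the global fact your argument establishes.

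You first reduce to the only case not already handled by Theorems~\ref{thm1} and~\ref{thm2} (or by the explicit solution when $b=0$), namely $a\le 0<b$. There you combine positivity, which comes from the quasi-monotone structure when $b>0$, with the power-law subsolution $t^{\kappa}$ and a first-crossing comparison. The steps check out. In particular, $qt_1\in[qT,t_1)$ gives $w(qt_1)>0$, and the strict inequality at $t_1$ yields the contradiction.

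You also gain a quantitative bound: $x(t)\ge\varepsilon t^{\kappa}$ for every $\kappa<\kappa_0=\ln(-a/b)/\ln q$. This is consistent with the paper's numerical observations for $a<0<a+b$. Indeed $\kappa_0=1$ exactly when $q=-a/b$, while $\kappa_0<1$ for $q<-a/b$ and $\kappa_0>1$ for $q>-a/b$, matching the observed linear-looking, slow, and fast divergence.

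The costs are length, a case split rather than a uniform argument, and essential use of $b>0$ in the comparison step. Your closing remarks about $V=\ln x$ are not needed for the proof.
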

Proof: We define the Lyapunov--Chetaev function as 
$$ V(x)=x^2.$$
Then $V(0)=0$ and $V(x_0)>0,$ for $x_0\ne 0$. 

Now, 
$$ \dot V(x(t))=2x(t)\dot x(t)=2ax(t)^2+2bx(t)x(qt).$$

Now, 
$$ \dot V(x_0)=\dot V(x(t))\vert_{t=0}=2(a+b)x_0^2>0.$$
Let us denote the solution of (\ref{peq}) with $x(0)=x_0$ by $x(t)$.
Hence, there exists $\epsilon>0$ such that 
$$\dot V(x(t))>0, \quad \text{for}\quad  0<t<\epsilon.$$
Thus, $V$ increases along the solution trajectories of (\ref{peq}) for sufficiently small values of $t$. Consequently, trajectories starting arbitrarily close to the origin move away from the origin and the system is unstable.
\qed

\subsection{Numerical observations} \label{conjec}
\subsubsection{Validation of results}
In this section, we use the predictor-corrector method described in the appendix to validate the results provided in the previous sections.
\begin{itemize}
  \item For $a>0$ and $a+b>0$, we observe that the solution is monotonically increasing (respectively, decreasing) for $x(0)>0$ (respectively, $x(0)<0$) and becomes unbounded quickly.\\
  Figures \ref{f01} and \ref{f02} show the diverging trajectories for $a=2.5, b=1, q=0.2, x(0)=1$ and $a=2.5, b=-2, q=0.7, x(0)=-1$, respectively.
  \item For $a<0$ and $a+b>0$, \\(i) $q=-a/b$ produces a solution trajectory resembling a straight line with slope 1 (see Figure \ref{f03}).\\
  (ii) $0<q<-a/b$ produces a solution trajectory that diverges very slowly, with logarithmic-like growth (see Figure \ref{f04}).\\
  (iii) $-a/b<q<1$ gives a monotonically diverging trajectory (see Figure \ref{f05}).
  \item For $a<0$, $a< b<-a$, and $\frac{b^2}{a^2}<q<1$ the solution trajectory converges monotonically to $0$ (see Figure \ref{f06}).
\end{itemize}

\begin{figure}[htbp]
  \centering
  \begin{subfigure}{0.48\textwidth}
    \centering
    \includegraphics[width=\textwidth]{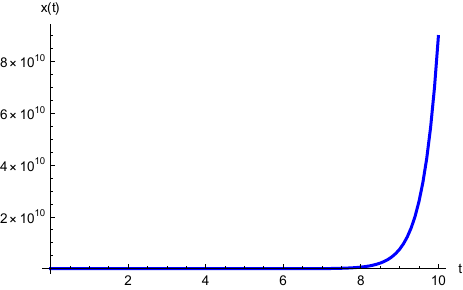}
    \caption{$a=2.5, b=1, q=0.2, x(0)=1$.}\label{f01}
  \end{subfigure}\hfill
  \begin{subfigure}{0.48\textwidth}
    \centering
    \includegraphics[width=\textwidth]{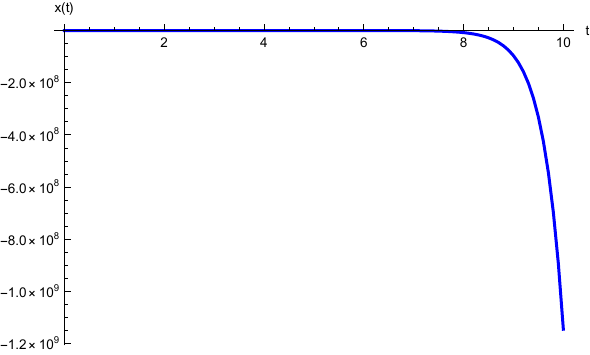}
    \caption{$a=2.5, b=-2, q=0.7, x(0)=-1$.}\label{f02}
  \end{subfigure}

  \begin{subfigure}{0.48\textwidth}
    \centering
    \includegraphics[width=\textwidth]{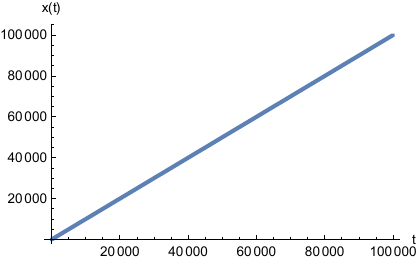}
    \caption{$a=-4, b=5, q=0.8=-a/b$.} \label{f03}
  \end{subfigure}\hfill
  \begin{subfigure}{0.48\textwidth}
    \centering
    \includegraphics[width=\textwidth]{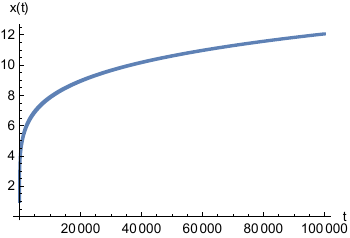}
    \caption{$a=-4, b=5, q=0.3<-a/b$.} \label{f04}
  \end{subfigure}

  \begin{subfigure}{0.48\textwidth}
    \centering
    \includegraphics[width=\textwidth]{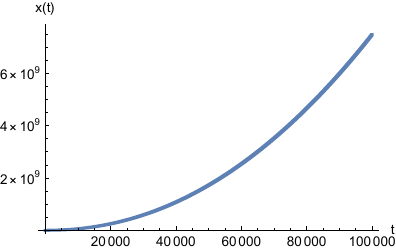}
    \caption{$a=-4, b=5, q=0.9>-a/b$.} \label{f05}
  \end{subfigure}\hfill
  \begin{subfigure}{0.48\textwidth}
    \centering
    \includegraphics[width=\textwidth]{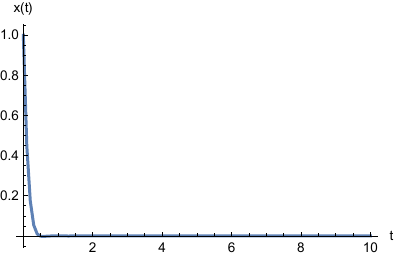}
    \caption{$a=-6, b=-5, q=0.8>b^2/a^2$.} \label{f06}
  \end{subfigure}
  \caption{Numerical solution trajectories for representative parameter values.}
  \label{fig:numerical-observations}
\end{figure}

\subsubsection{Remaining regions in the parameter plane}
In this section, we consider the regions in the $ab$-plane for which no theoretical result was provided above. 

\textbf{Region 1:} We consider $a<0$, $a<b<-a$, and  $0<q\leq \frac{b^2}{a^2}$. We observe that the system is asymptotically stable in this region as well. However, the convergence is ultra-slow for smaller values of $q$. For $b\in(a,0)$, the solution exhibits damped oscillations and converges to $0$ (see Figure \ref{f07}). On the other hand, for $b\in(0,-a)$, the solution converges monotonically to $0$ (see Figure \ref{f08}).

\textbf{Region 2:} We consider $b<0$ and $b<a<-b$. We observe that, for each $a<0$, there exists $b_*<a$ such that, when $b\in(b_*, a)$, the system is stable for larger values of $q\in(0,1)$ (see Figure \ref{f09}) and unstable for smaller values of $q$ (see Figure \ref{f10}). In the unstable case, the trajectory oscillates and diverges ultra-slowly.\\
 If $b<b_*$, then the system is unstable for all $q\in(0,1)$ (see Figure \ref{f11}). The trajectory oscillates, and the divergence is faster for $q$ near $1$.\\ 
 We list a few values of the pairs $(a, b_*)$ in Table \ref{tab1}. The data are well approximated by $b_*=0.014455 a^2+1.391065a-0.03994898$. 
If $a>0$ in this region, then the solution is unstable for any $q\in(0,1)$ (see Figure \ref{f12}). The trajectory tends to $-\text{sgn}(x(0))\infty$ very quickly. We get faster divergence as $q$ tends to $0$.

\begin{figure}[htbp]
  \centering
  \begin{subfigure}{0.48\textwidth}
    \centering
    \includegraphics[width=\textwidth]{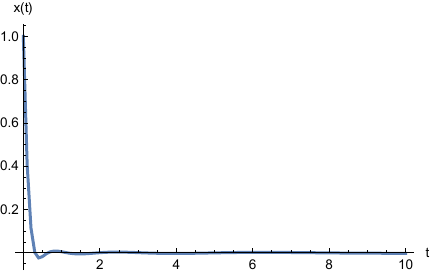}
    \caption{$a=-6, b=-5, b^2/a^2=0.6944, q=0.65<b^2/a^2$.}\label{f07}
  \end{subfigure}\hfill
  \begin{subfigure}{0.48\textwidth}
    \centering
    \includegraphics[width=\textwidth]{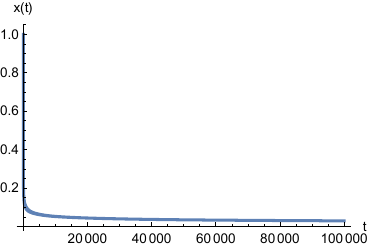}
    \caption{$a=-6, b=4, q=0.2<b^2/a^2$.}\label{f08}
  \end{subfigure}

  \begin{subfigure}{0.48\textwidth}
    \centering
    \includegraphics[width=\textwidth]{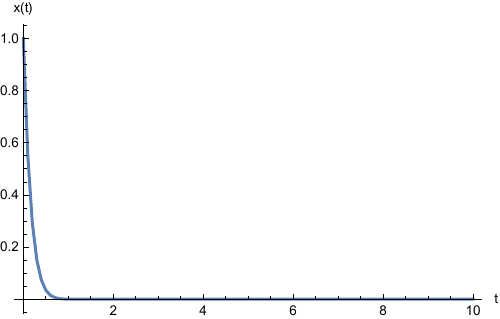}
    \caption{$a=-3, b_*=-4.1, b=-3.5\in(b_*, a), q=0.9$.} \label{f09}
  \end{subfigure}\hfill
  \begin{subfigure}{0.48\textwidth}
    \centering
    \includegraphics[width=\textwidth]{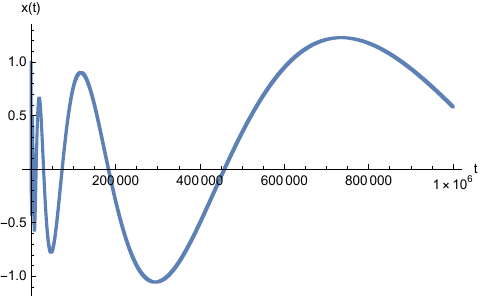}
    \caption{$a=-3, b_*=-4.1, b=-3.5\in(b_*, a), q=0.4$.} \label{f10}
  \end{subfigure}

  \begin{subfigure}{0.48\textwidth}
    \centering
    \includegraphics[width=\textwidth]{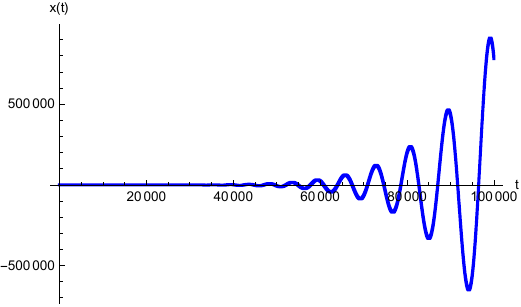}
    \caption{$a=-3, b_*=-4.1, b=-4.2<b_*, q=0.95$.} \label{f11}
  \end{subfigure}\hfill
  \begin{subfigure}{0.48\textwidth}
    \centering
    \includegraphics[width=\textwidth]{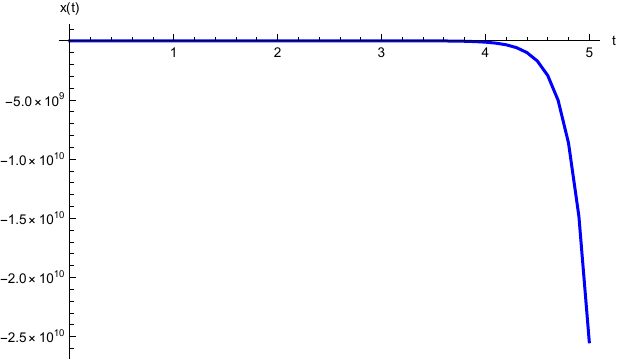}
    \caption{$b=-7, a=5.6\in(0, -b), q=0.5, x(0)=1$.} \label{f12}
  \end{subfigure}
  \caption{Numerical solutions in the remaining regions.}
  \label{fig:numerical-observations2}
\end{figure}

\begin{table}[h!]
\centering
\begin{tabular}{|c|c|c|c|c|c|c|c|}
\hline
$a$ & $b_*$ & $a$ & $b_*$ & $a$ & $b_*$ & $a$ & $b_*$ \\
\hline
0 & 0 & -2.6 & -3.6 & -5.2 & -6.9 & -7.8 & -10 \\
-0.2 & -0.3 & -2.8 & -3.8 & -5.4 & -7.1 & -8 & -10.2 \\
-0.4 & -0.6 & -3 & -4.1 & -5.6 & -7.4 & -8.2 & -10.5 \\
-0.6 & -0.9 & -3.2 & -4.3 & -5.8 & -7.6 & -8.4 & -10.7 \\
-0.8 & -1.1 & -3.4 & -4.6 & -6 & -7.9 & -8.6 & -10.9 \\
-1 & -1.4 & -3.6 & -4.9 & -6.2 & -8.1 & -8.8 & -11.2 \\
-1.2 & -1.7 & -3.8 & -5.1 & -6.4 & -8.4 & -9 & -11.4 \\
-1.4 & -2 & -4 & -5.4 & -6.6 & -8.6 & -9.2 & -11.6 \\
-1.6 & -2.2 & -4.2 & -5.6 & -6.8 & -8.8 & -9.4 & -11.8 \\
-1.8 & -2.5 & -4.4 & -5.9 & -7 & -9.1 & -9.6 & -12.1 \\
-2 & -2.8 & -4.6 & -6.1 & -7.2 & -9.3 & -9.8 & -12.3 \\
-2.2 & -3 & -4.8 & -6.4 & -7.4 & -9.5 & -10 & -12.5 \\
-2.4 & -3.3 & -5 & -6.6 & -7.6 & -9.8 & -10.2 & -12.7 \\
\hline
\end{tabular}
\caption{Data table of $a$ and $b_*$ values.}
\label{tab1}
\end{table}

\subsection{The stability diagram in parameter plane}
Based on the results provided in previous sections and the observations based on numerical computations, we sketch the stability regions in the parameter $ab$-plane in Figure~\ref{fig1}.

\begin{figure}[htbp]
  \centering
  \includegraphics[width=0.8\textwidth]{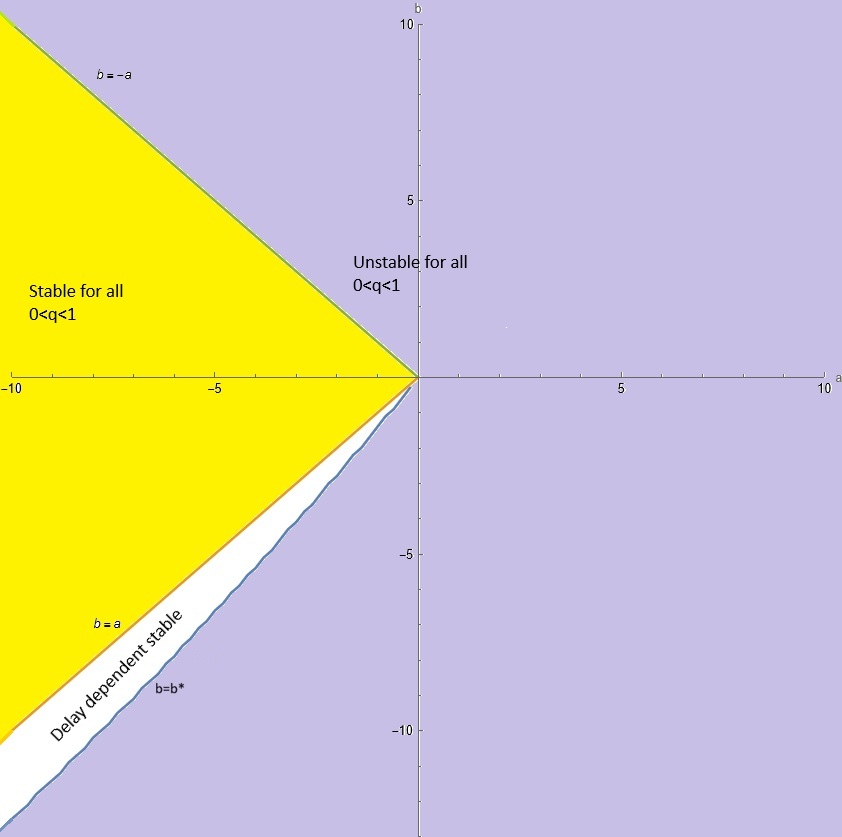}
  \caption{Various stability regions of system (\ref{peq}) in the parameter plane.}
  \label{fig1}
\end{figure}


\section{Bifurcations and chaos in a $q$-analogue of the Mackey--Glass system} \label{qmgs}
The classical Mackey--Glass chaotic system \cite{mackey1977oscillation,glass2010mackey} is given by
\begin{equation*}
\dot x(t)=-\beta x(t)+\frac{\alpha x(t-\tau)}{1+x(t-\tau)^c}, 
\end{equation*}
where $\tau>0$ is the (constant) delay.

We define the $q$-analogue of the Mackey--Glass system by 
\begin{equation}
\dot x(t)=-\beta x(t)+\frac{\alpha x(qt)}{1+x(qt)^c}, \label{mg}
\end{equation}
where $\alpha>\beta>0,$ $c>2$ and $0<q<1$.

The system (\ref{mg}) admits the equilibrium points $x_1^*=0$ and $x_2^*=\left(\frac{\alpha-\beta}{\beta}\right)^{1/c}$. Note that, $x_2^*$ is a collection of equilibrium points with identical stability properties.

Linearization of (\ref{mg}) at $x_1^*$ is given by
\begin{equation*}
  \dot x(t)= -\beta x(t)+\alpha x(qt).
\end{equation*}
Comparing this with (\ref{peq}), we get $a=-\beta$ and $b=\alpha$. Our assumptions show that $a+b>0$. Hence, by Theorem \ref{CheTh}, the equilibrium $x_1^*$ is unstable for all $q\in(0, 1)$.

\begin{theorem}\label{mgstab}
For system (\ref{mg}), the relevant bifurcation values of the parameter $\alpha$ are
\[
\alpha_1^*=\frac{\beta c}{c-2}
\qquad \text{and} \qquad
\alpha_2^*=\frac{\beta^2 c}{b_*+\beta(c-1)},
\]
where $b_*$ is the threshold value described in Table \ref{tab1}.

The nonzero equilibria $x_2^*$ satisfy the following stability classification:
\begin{itemize}
  \item they are asymptotically stable for all $q\in(0, 1)$ if $\beta<\alpha<\alpha_1^*$;
  \item they are delay-dependent stable if $\alpha_1^*<\alpha<\alpha_2^*$;
  \item they are unstable for all $q\in(0, 1)$ if $\alpha>\alpha_2^*$.
\end{itemize}
\end{theorem}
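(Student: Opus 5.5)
We linearize (\ref{mg}) about $x_2^*$ and reduce to the scalar pantograph equation (\ref{peq}), so that the stability regions of Section~\ref{stab} (Theorem~\ref{LyTh}, Theorem~\ref{thm2}, Theorem~\ref{CheTh}, and the numerically identified Region~1 and Region~2) can be read off. Set $f(y)=\frac{\alpha y}{1+y^c}$. Then
\[
f'(y)=\frac{\alpha\bigl(1+(1-c)y^c\bigr)}{(1+y^c)^2}.
\]
At the equilibrium we have $1+(x_2^*)^c=\alpha/\beta$ and $(x_2^*)^c=(\alpha-\beta)/\beta$. Substituting gives
\[
f'(x_2^*)=\frac{\beta^2}{\alpha}\Bigl(1+(1-c)\frac{\alpha-\beta}{\beta}\Bigr)=\frac{\beta\bigl(c\beta-(c-1)\alpha\bigr)}{\alpha}.
\]
Writing $x=x_2^*+y$, the linearization is $\dot y(t)=ay(t)+by(qt)$ with $a=-\beta<0$ and $b=f'(x_2^*)$. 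Since $a$ is fixed, only $b$ moves as $\alpha$ varies. Moreover $b$ is strictly decreasing in $\alpha$ on $(\beta,\infty)$: it equals $\beta$ at $\alpha=\beta$ and tends to $-(c-1)\beta$ as $\alpha\to\infty$.

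The next step is to locate $b$ relative to the boundaries of the $ab$-plane diagram (Figure~\ref{fig1}) along the vertical line $a=-\beta$.
\begin{itemize}
\item \emph{Lower boundary of the stable strip $|b|<-a$.} The condition $b>a=-\beta$ reads $c\beta-(c-1)\alpha>-\alpha$, that is, $c\beta>(c-2)\alpha$, which is $\alpha<\alpha_1^*=\frac{\beta c}{c-2}$. Here $c>2$ is used.
\item \emph{Upper boundary.} The condition $b<\beta$ holds automatically for $\alpha>\beta$.
\item \emph{Threshold curve $b_*$.} The condition $b=b_*(a)$ with $a=-\beta$ gives $\beta c\beta-\beta(c-1)\alpha=b_*\alpha$, that is, $\alpha=\frac{\beta^2 c}{b_*+\beta(c-1)}=\alpha_2^*$. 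By monotonicity, $b<b_*$ exactly when $\alpha>\alpha_2^*$, provided $b_*+\beta(c-1)>0$.
\end{itemize}

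Each $\alpha$-interval then inherits the classification of the corresponding region.
\begin{itemize}
\item For $\beta<\alpha<\alpha_1^*$ we are in $a<0$, $a<b<-a$. Asymptotic stability follows from Theorem~\ref{LyTh} when $q>b^2/a^2$, and from the observations on Region~1 for $q\le b^2/a^2$. Together these give stability for all $q\in(0,1)$.
\item For $\alpha_1^*<\alpha<\alpha_2^*$ we have $b_*<b<a<0$. This is Region~2 with $b\in(b_*,a)$, where stability holds for large $q$ and fails for small $q$, so the equilibria are delay-dependent stable.
\item For $\alpha>\alpha_2^*$ we have $b<b_*$. Here Region~2 gives instability for all $q\in(0,1)$.
\end{itemize}
Finally, we invoke the standard principle that stability of the linearization determines local stability of the equilibrium, with the nonlinear remainder being $o(|y(qt)|)$.

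The main obstacle is rigor rather than computation. The classification for $q\le b^2/a^2$ and the threshold $b_*$ rest on the numerical observations of Section~\ref{conjec}, not on theorems, so the statement is only as firm as those observations. I would present this part explicitly as conditional on the Region~1 and Region~2 behaviour.

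A linearization theorem for proportional delay is also not directly available in the standard constant-delay theory. I would first try to justify it by a Lyapunov--Krasovskii perturbation of the functional in Theorem~\ref{LyTh}, which gives robustness to small nonlinear terms where $M$ is negative definite. In the remaining subregions I would rely on the numerics.

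I would also verify the sign condition $b_*+\beta(c-1)>0$, which ensures that $\alpha_2^*$ is well defined and exceeds $\alpha_1^*$. This amounts to checking $b_*(-\beta)>-(c-1)\beta$ against the fitted formula for $b_*$. Since $b\to-(c-1)\beta$ as $\alpha\to\infty$, this condition is also exactly what is needed for the line $b=b_*$ to be crossed at all.
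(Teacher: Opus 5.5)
Your proposal is correct and follows essentially the same route as the paper: linearize at $x_2^*$ to get $a=-\beta$ and $b=\frac{\beta^2 c}{\alpha}-(c-1)\beta$, then read off the regions of Figure~\ref{fig1} by solving $b=a$ and $b=b_*$ for $\alpha$. Your additions make explicit what the paper leaves implicit: the monotonicity of $b$ in $\alpha$, the fact that $b<-a$ is automatic for $\alpha>\beta$, and the condition $b_*+\beta(c-1)>0$ (which, together with $b_*<a$, gives $\alpha_2^*>\alpha_1^*$), as well as your caveat that the Region~1/Region~2 behaviour and a linearization principle for proportional delay are not actually proved.
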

\textbf{Proof}: Linearization of the equation (\ref{mg}) at $x_2^*$ is given by
\begin{equation*}
 \dot x(t)= -\beta x(t)+\left(\frac{(\beta-\alpha)\beta c}{\alpha}+\beta\right) x(qt).  
\end{equation*}
Comparison with (\ref{peq}) gives $a=-\beta$ and $b=\frac{(\beta-\alpha)\beta c}{\alpha}+\beta$.

Our assumptions on $\alpha$, $\beta$, and $c$ show that $a<0$ and $a+b<0$. Following the results discussed in Section \ref{stab} and summarized in Figure \ref{fig1}, we conclude that the nonzero equilibria $x_2^*$ are asymptotically stable for all $0<q<1$ if $a<b<-a$, delay-dependent stable if $b_*<b<a$, and unstable if $b<b_*$.

Thus, the first bifurcation point is obtained by solving $a=b$, namely,
\begin{equation*}
  -\beta=\frac{(\beta-\alpha)\beta c}{\alpha}+\beta.
\end{equation*}
On simplification, we get the first bifurcation value of the parameter $\alpha$ as
\begin{equation}
\alpha_1^*=\frac{\beta c}{c-2}.
\end{equation}
The second bifurcation value is obtained by solving $b=b_*$, namely,
\begin{equation*}
\frac{(\beta-\alpha)\beta c}{\alpha}+\beta=b_*.
\end{equation*}
This gives the second bifurcation value of the parameter $\alpha$ as
\begin{equation}
\alpha_2^*=\frac{\beta^2 c}{b_*+\beta(c-1)}.
\end{equation}
This proves the theorem.
\qed 

\subsection{Validation of Theorem \ref{mgstab} and investigation of chaos}
Let us fix $\beta=1$ and $c=10$ in the system (\ref{mg}). Then $a=-1$, and Table \ref{tab1} gives $b_*=-1.4$. Furthermore, the bifurcation values for the parameter $\alpha$ are $\alpha_1^*=1.25$ and $\alpha_2^*=1.31579$. Theorem \ref{mgstab} states that the nonzero equilibria $x_2^*$ satisfy the following stability classification:
\begin{itemize}
  \item they are asymptotically stable for all $q\in(0, 1)$ if $1<\alpha<1.25$;
  \item they are delay-dependent stable if $1.25<\alpha<1.31579$;
  \item they are unstable for all $q\in(0, 1)$ if $\alpha>1.31579$.
\end{itemize}

For $\alpha=1.24,$ the two nonzero equilibria are $0.867004$ and $-0.867004$. If $x(0)>0$, then the resulting solution converges to $0.867004$ (see Figure \ref{f13}). For $x(0)<0$, the solution converges to $-0.867004$ (see Figure \ref{f14}).

For $\alpha=1.28$, the equilibrium points $x_2^*$ have values $0.880473$ and $-0.880473$. We get the stable solution for $q=0.7$, as shown in Figure \ref{f15}. For $q=0.6$ and $x(0)=0.2$, the solution trajectory oscillates around $0.867004$ (see Figure \ref{f16}). The period of solution wave increases with $t$. It is expected because the delay $\tau(t)=(1-q)t$ increases with $t$.

For $\alpha=1.6$, the equilibrium points $x_2^*$ take the values $\pm 0.9502$. We observe chaotic oscillations for $q=0.9$ as shown in Figure \ref{f17}. The chaotic attractor is shown in Figure \ref{f18}. 
In fact, we have coexisting chaotic attractors as shown in Figure \ref{f18}.

\begin{figure}[htbp]
  \centering
  \begin{subfigure}{0.48\textwidth}
    \centering
    \includegraphics[width=\textwidth]{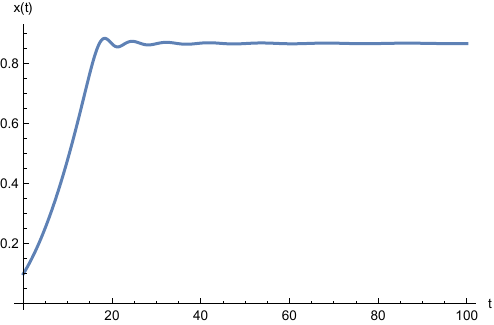}
    \caption{$\alpha=1.24$, $x(0)>0$: convergence to $0.867004$.}
    \label{f13}
  \end{subfigure}\hfill
  \begin{subfigure}{0.48\textwidth}
    \centering
    \includegraphics[width=\textwidth]{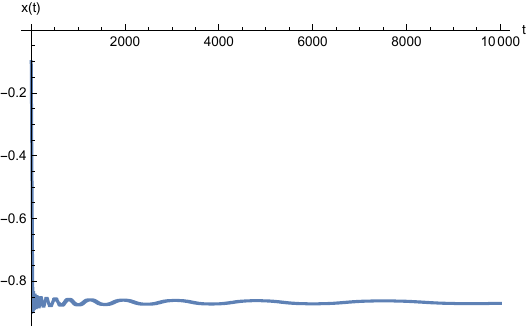}
    \caption{$\alpha=1.24$, $x(0)<0$: convergence to $-0.867004$.}
    \label{f14}
  \end{subfigure}

  \begin{subfigure}{0.48\textwidth}
    \centering
    \includegraphics[width=\textwidth]{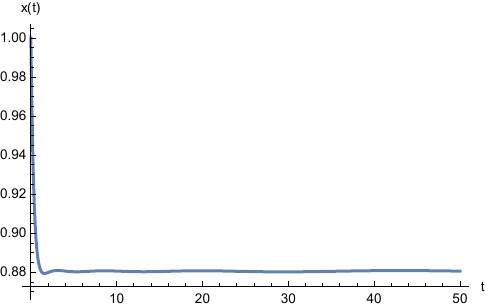}
    \caption{$\alpha=1.28$, $q=0.7$: stable solution.}
    \label{f15}
  \end{subfigure}\hfill
  \begin{subfigure}{0.48\textwidth}
    \centering
    \includegraphics[width=\textwidth]{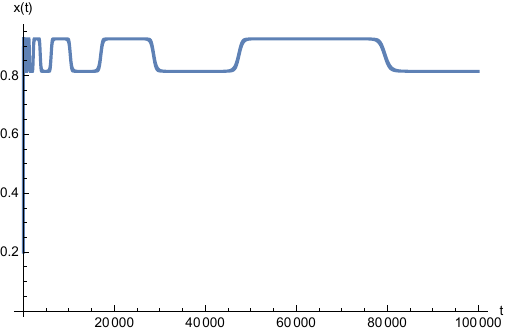}
    \caption{$\alpha=1.28$, $q=0.6$, $x(0)=0.2$: oscillation around $0.867004$.}
    \label{f16}
  \end{subfigure}

  \begin{subfigure}{0.48\textwidth}
    \centering
    \includegraphics[width=\textwidth]{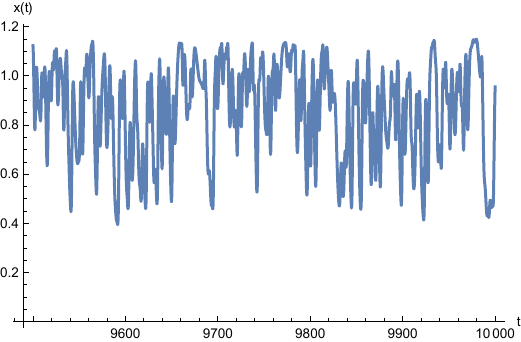}
    \caption{$\alpha=1.6$, $q=0.9$: chaotic oscillations.}
    \label{f17}
  \end{subfigure}\hfill
  \begin{subfigure}{0.48\textwidth}
    \centering
    \includegraphics[width=\textwidth]{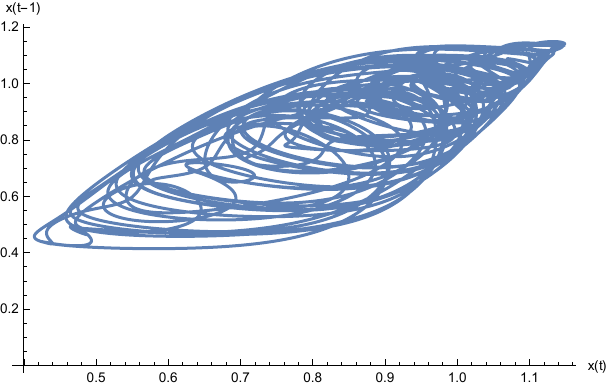}
    \caption{$\alpha=1.6$, $q=0.9$: chaotic attractor.}
    \label{f18}
  \end{subfigure}
  \caption{Validation of Theorem~\ref{mgstab} and numerical investigation of stability and chaos in the proportional-delay Mackey--Glass equation.}
  \label{fig:mg-validation-chaos}
\end{figure}

\begin{figure}[htbp]
  \centering
  \includegraphics[width=0.8\textwidth]{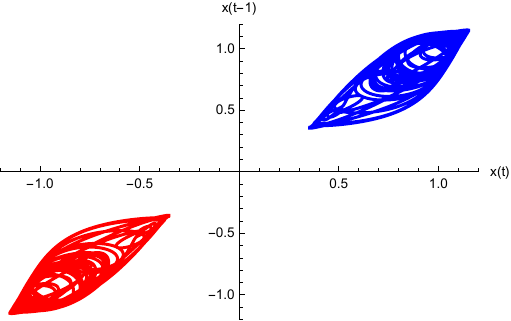}
  \caption{Coexisting chaotic attractors for the proportional-delay Mackey--Glass equation with $\alpha=1.6, q=0.7$.}
  \label{f19}
\end{figure}

\section{Conclusion and discussion} \label{concl}
This work examined the scalar pantograph delay equation
\[
\dot x(t)=a x(t)+b x(qt), \qquad 0<q<1,
\]
from the viewpoint of stability, bifurcation, and nonlinear dynamics. Since the proportional delay prevents the direct construction of a characteristic equation by the usual exponential ansatz, the analysis was based on a combination of exact series representations, Lyapunov methods, and numerical experiments. The series solution yielded simple sufficient conditions for instability, including the cases $a>0$, $b>0$ and $a>-b>0$. A Lyapunov--Krasovskii functional gave a rigorous sufficient condition for asymptotic stability when $a<0$, $a<b<-a$, and $b^2/a^2<q<1$, whereas a Lyapunov--Chetaev argument showed instability when $a+b>0$.

The numerical study complements these analytical criteria and gives a more complete picture of the parameter plane. The computations indicate that the condition obtained from the Lyapunov--Krasovskii functional is conservative: solutions appear to be asymptotically stable for all $0<q<1$ throughout the wider region $a<0$ and $a<b<-a$. In the region $a<0$ and $b<a$, the simulations suggest the existence of a threshold $b_*=b_*(a)$ separating parameters that are unstable for all proportional delays from parameters for which stability depends on $q$. For the sampled values of $a$, this threshold is well approximated by the quadratic fit
\[
b_*(a)=0.014455a^2+1.391065a-0.03994898.
\]
Thus, the stability diagram consists of delay-independent stable regions, delay-independent unstable regions, and an intermediate delay-dependent region. A notable numerical feature in several boundary and delay-dependent cases is ultra-slow convergence or divergence, often accompanied by oscillations whose effective period increases with time because the delay $\tau(t)=(1-q)t$ grows unboundedly.

The same stability framework was then applied to a proportional-delay analogue of the Mackey--Glass equation. Linearization about the nonzero equilibria reduces the local stability problem to the pantograph equation with coefficients determined by the biological parameters. This reduction gives explicit bifurcation thresholds
\[
\alpha_1^*=\frac{\beta c}{c-2},
\qquad
\alpha_2^*=\frac{\beta^2c}{b_*+\beta(c-1)},
\]
which separate delay-independent stability, delay-dependent stability, and instability. For the representative case $\beta=1$ and $c=10$, these values are $\alpha_1^*=1.25$ and $\alpha_2^*=1.31579$. The simulations validate the predicted stable and delay-dependent regimes and also reveal rich nonlinear behaviour, including sustained oscillations, chaotic trajectories, and coexisting chaotic attractors for larger values of $\alpha$.

Several questions remain open. First, the numerical conjecture that the entire region $a<0$, $a<b<-a$ is asymptotically stable for every $q\in(0,1)$ deserves a rigorous proof. Second, the threshold curve $b_*(a)$ was obtained empirically; deriving it analytically, or proving sharp bounds for it, would substantially strengthen the stability classification. These directions would help turn the present stability diagram and numerical observations into a more complete theory for pantograph-type delay systems.

\appendix

\setcounter{equation}{0}
\renewcommand{\theequation}{A\arabic{equation}}
\section*{Appendix: A predictor-corrector method to solve pantograph equation}
Consider
\begin{equation}
\dot x(t)=f\left(x(t), x(qt)\right), \qquad x(0)=x_0, \qquad t\in[0,T]. \label{1}
\end{equation}

Let us discretize the interval $[0,T]$ as
\[
0=t_0<t_1=h<t_2=2h<\cdots<t_N=Nh=T,
\]
where $h>0$. Assume we already know $x_i$, $i=0,1,\cdots,n$ and wish to find $x_{n+1}$.

Integrating (\ref{1}) on the interval $[t_n,t_{n+1}]$ and using the trapezoidal rule, we get

\begin{equation}
x_{n+1}
=
x_n+\frac{h}{2}
\left[
f(t_{n+1},x_{n+1},\mu_{n+1})
+
f(t_n,x_n,\mu_n)
\right],\label{3}
\end{equation}
where $x_n\approx x(t_n)$, and $\mu_n \text{ is an approximation to } x(qt_n)$.

Note that $qt_n<t_n$ as $0<q<1$.

If $qt_n=t_m$ for some $m<n$, then $\mu_n=x_m$.

Otherwise, $qt_n\in(t_{m-1},t_m)$.

In this case, we approximate $x(qt_n)$ by utilizing linear interpolation:
\[
x(qt_n)\approx \mu_n= (1-qn+m-1) x_{m-1}+(qn-m+1)x_m
\]

As $qn h \in ((m-1)h,mh)$, we have $[qn]=m-1$.

Hence,
\begin{eqnarray}
\mu_n&=&(1-qn+[qn])x_{m-1}
+
(qn-[qn])x_{m}
\nonumber\\
&=&
(1-\delta_n)x_{[qn]}
+
\delta_n x_{[qn]+1},\label{4}
\end{eqnarray}

where $\delta_n=qn-[qn]$ is the fractional part of $qn$.

Notice that $0\le \delta_n<1,$ and everything in the right-hand side of (\ref{4}) is known.

Now $x_{n+1}$ appears on both sides of (\ref{3}). One way to find it is by a predictor-corrector method. First, an approximate value of $x_{n+1}$ is obtained using a predictor, and then this approximation is refined by a correction step.

\textbf{Predictor Step:}

We use the rectangle rule on the interval $[t_n,t_{n+1}]$ and define
\begin{equation}
x_{n+1}^{p}
=
x_n+h\,f(t_n,x_n,\mu_n).\label{5}
\end{equation}

\textbf{Corrector Step:}

\begin{equation}
x_{n+1}
=
x_n+\frac{h}{2}
\left[
f(t_{n+1},x_{n+1}^{p},\mu_{n+1})
+
f(t_n,x_n,\mu_n)
\right].\label{6}
\end{equation}

Now $q(n+1)<n+1$.

\begin{itemize}
\item If $q(n+1)< n$, then
\begin{equation}
\mu_{n+1}
=
(1-\delta_{n+1})x_{[q(n+1)]}
+
\delta_{n+1}x_{[q(n+1)]+1},\label{7}
\end{equation}
where $\delta_{n+1}=q(n+1)-[q(n+1)]$.
In this case, $[q(n+1)]+1\le n+1$ and all the terms on the right-hand side of (\ref{7}) are already known.

\item If $n<q(n+1)<n+1,$ then $[q(n+1)]+1=n+1,$
and the unknown term $x_{n+1}$ appears in the right-hand side of (\ref{7}) . In this case, we replace $\mu_{n+1}$ by
\begin{equation}
\mu_{n+1}^{p}=(1-\delta_{n+1})x_n+\delta_{n+1}x_{n+1}^{p},\label{8}
\end{equation}
and use this predictor value in (\ref{6}). After computing (\ref{6}), we will correct the value of $\mu_{n+1}$ by utilizing (\ref{6}) and proceed to the next step.
\end{itemize}

\textbf{Illustration:}
We illustrate above algorithm and provide the details of the evaluation of few terms.

Given $x_0$, we have to find $x_1$.
\begin{equation*}
x_1^{p}
=
x_0+h\,f(0,x_0,\mu_0),
\end{equation*}
where $\mu_0=x(q\cdot 0)=x(0)=x_0$ and
\begin{equation}
x_1
=
x_0+\frac{h}{2}
\left[
f(h,x_1^{p},\mu_1)
+
f(0,x_0,\mu_0)
\right],\label{9}
\end{equation}
where
\begin{equation*}
\mu_1
=
(1-\delta_1)x_{[q]}
+
\delta_1 x_{[q]+1},
\end{equation*}
with $\delta_1=q-[q],$ $x_{[q]}=x_0$ and $x_{[q]+1}=x_1$ as $0<q<1$.

Thus,
\begin{equation}
\mu_1=(1-q)x_0+q\,x_1.\label{10}
\end{equation}

Since $x_1$ is not known yet, we take
\[
\mu_1^{p}=(1-q)x_0+q\,x_1^{p}
\]
and use this in (\ref{9}). Once we evaluate (\ref{9}), we update $\mu_1$ using (\ref{10}).

Now, $x_2^{p}=x_1+h\,f(h,x_1,\mu_1),$ and 
\begin{equation}
\mu_2=(1-\delta_2)x_{[2q]}+\delta_2 x_{[2q]+1},
\label{11}
\end{equation}
where
\[
[2q]
=
\begin{cases}
0, & 0<q<\dfrac12, \\[1ex]
1, & \dfrac12\le q<1.
\end{cases}
\]

Also,
\[
\delta_2
=
2q-[2q]
=
\begin{cases}
2q, & 0<q<\dfrac12, \\[1ex]
2q-1, & \dfrac12\le q<1.
\end{cases}
\]
Thus, if $0<q<\dfrac12$, then
\[
\mu_2=(1-2q)x_0+2q\,x_1.
\]

If $\dfrac12\le q<1$, then
\[
\mu_2=(2-2q)x_1+(2q-1)x_2.
\]

As $x_2$ is not yet available, we take
\[
\mu_2^{p}
=
(2-2q)x_1+(2q-1)x_2^{p},
\]
and then evaluate
\[
x_2
=
x_1+\frac{h}{2}
\left[
f(2h,x_2^{p},\mu_2)
+
f(h,x_1,\mu_1)
\right].
\]

Similarly,
\[
\mu_3
=
(1-\delta_3)x_{[3q]}
+
\delta_3 x_{[3q]+1}.
\]

If $0<q<\dfrac13$, then $\delta_3=3q,$ $[3q]=0$ and hence $\mu_3=(1-3q)x_0+3q\,x_1$.

If $\dfrac13\le q<\dfrac23$, then $\delta_3=3q-1,$ $[3q]=1$ and hence $\mu_3=(2-3q)x_1+(3q-1)x_2$.

If $\dfrac23\le q<1$, then $\delta_3=3q-2,$ $[3q]=2$. In this case, first we compute $x_3^{p}=x_2+h\,f(2h,x_2,\mu_2)$ and define $\mu_3^p=(3-3q)x_2+(3q-2)x_3^{p}$. The procedure will be continued.

In the algorithm, we need to find a natural number $k=\left[\frac{1}{1-q}\right]$.

The procedure will be as follows.

For $n=1,2,\ldots,k$:
\begin{enumerate}
\item Find $x_n^{p}$ using (\ref{5}).
\item Find $\mu_n^{p}$ using (\ref{8}).
\item Find $x_n$ using (\ref{6}).
\item Find $\mu_n$ using $x_n$.
\end{enumerate}

For $n>K$:
\begin{enumerate}
\item Find $x_n^{p}$ using (\ref{5}).
\item Find $\mu_n$ using (\ref{7}).
\item Find $x_n$ using (\ref{6}).
\end{enumerate}
The C program for this method is available at \cite{BhalekarPantographGitHub}.
More details on numerical methods to solve delay differential equations are available in \cite{BellenZennaro2003}.

\bibliographystyle{unsrt}
\bibliography{references}

\end{document}